\def\BBox{\kern  -0.2cm\hbox{\vrule width 0.2cm height 0.2cm}}
\newtheorem{theorem}{Theorem}[section]
\newtheorem{definition}{Definition}[section]
\newtheorem{proposition}{Proposition}[section]
\title{A construction of small $(q-1)$-regular graphs of girth 8}
\author{ M. Abreu$^{1}$, G. Araujo-Pardo$^{2}$, C. Balbuena$^{3}$,
D. Labbate$^{1}$
\thanks{ Research   supported by the Ministerio de Educación y Ciencia,
Spain, the European Regional Development Fund (ERDF) under project
MTM2011-28800-C02-02; by the Catalonian Government
under project 1298 SGR2009; by CONACyT-México under project 57371;
by PAPIIT-México under project 104609-3; 
by the Italian Ministry MIUR 
and carried out within the activity of INdAM-GNSAGA. \newline \footnotesize{\em
Email addresses:} marien.abreu@unibas.it (M. Abreu),~
garaujo@matem.unam.mx (G. Araujo), ~ m.camino.balbuena@upc.edu (C.
Balbuena), \, \, ~domenico.labbate@unibas.it (D. Labbate)}
 \\[2ex]
$^1${\footnotesize Dipartimento di Matematica, Informatica ed Economia,  Università degli Studi della
Basilicata,}\\
{\footnotesize Viale dell'Ateneo Lucano, I-85100 Potenza, Italy.} \\$^2$
{\footnotesize Instituto de Matem\'{a}ticas, Universidad Nacional Autónoma de México,} \\
{\footnotesize México D. F., México }\\
$^3${\footnotesize Departament de Matemática Aplicada III, Universitat
Politècnica de Catalunya, }\\
{\footnotesize Campus Nord, Edifici C2, C/ Jordi Girona 1 i 3 E-08034 Barcelona,
Spain.}
}
\date{}
\begin{document}
\maketitle
\begin{abstract}
In this note we  construct a new infinite family of $(q-1)$-regular graphs of girth $8$ and order $2q(q-1)^2$
for all prime powers $q\ge 16$, which are the smallest known so far whenever $q-1$ is not a prime power or a prime power plus one itself.
\end{abstract}

{\bf Keywords:}
Cages, girth, Moore graphs, perfect dominating sets.

{\bf MSC2010:} 05C35, 05C69


\section{Introduction}

Throughout this note, only undirected simple graphs without loops
or multiple edges are considered. Unless otherwise stated, we
follow  the book by Bondy and Murty \cite{BM} for terminology and notation.

 Let $G$ be a graph with vertex set
$V=V(G)$ and edge set
  $E=E(G)$.  The \emph{girth}  of a graph $G$ is the number $g=g(G)$ of edges in a
smallest cycle. For every $v\in V$, $N_G(v)$ denotes the \emph{neighbourhood} of $v$,
that is, the set of all vertices adjacent to $v$. The \emph{degree} of a vertex $v\in V$ is the
cardinality of    $N_G(v)$.   A graph is called
\emph{regular} if all the vertices have the same degree. A  \emph{$(k,g)$-graph} is a  $k$-regular graph with girth $g$.  Erd\H os and Sachs
 \cite{ES63}     proved the existence of  $(k,g)$-graphs
 for all values of $k$ and $g$ provided that $k \ge 2$. Since then most work carried
 out has  focused on constructing a smallest one (cf. e.g.
 \cite{AFLN06,AFLN08,AABL12,ABH10,BI73,B08,B09,B66,E96,GH08,LUW97,M99,OW81}).
A  \emph{$(k,g)$-cage} is a  $k$-regular graph with girth $g$ having the smallest possible
number of vertices. Cages have been intensely studied
since they were introduced by
Tutte \cite{T47} in 1947. More details about constructions of cages can be found in the
recent survey by Exoo and Jajcay \cite{EJ08}.

In this note we are interested in $(k,8)$-cages.
Counting the number of vertices in the
distance partition with respect to an edge yields the following lower bound
on the order of a $(k,8)$-cage:

  \begin{equation}\label{lower} n_0(k,8) = 2(1+(k-1)+(k-1)^2+(k-1)^3).\end{equation}

A  $(k,8)$-cage with
 $n_0(k,8)$ vertices is called a Moore \emph{$(k,8)$-graph} (cf. \cite{BM}). These graphs have been constructed as  the incidence graphs of generalized quadrangles of order $k-1$ (cf. \cite{B66}).          All these objects   are   known
to exist for all prime power values of $k-1$ (cf. e.g. \cite{ B97, GR00}), and no example is known when $k-1$ is not a prime power.
Since they are incidence graphs, these cages are bipartite and have diameter $4$.

A subset $U\subset V(G)$ is said to be   \emph{a perfect  dominating set of $G$} if for each vertex $x\in V(G)\setminus U$,  $|N_G(x)\cap U|=1$ (cf. \cite{HHS98}). Note that if $G$ is a $(k,8)$-graph and $U$ is a  perfect dominating set of $G$, then $G-U$ is clearly a $(k-1,8)$-graph.
Using classical generalized quadrangles, Beukemann and   Metsch  \cite{BM10} proved that the
cardinality of a perfect dominating set $ B $ of a Moore $(q+1,8)$-graph, $q$ a prime power, is at most $|B|\le 2(2q^2+2q)$ and if $q$ is even
$|B|\le 2(2q^2+q+1)$.

For $k=q+1$ where $q\ge 2$ is a prime power, we find a   perfect dominating set of cardinality $2(q^2+3q+1)$ for all $q$  (cf. Proposition \ref{perfect1}). This result allows us to explicitly obtain $q$-regular graphs of girth $8$ and
order $2q(q^2-2)$ for any prime power $q$ (cf. Definition \ref{GrGq}).
Finally, we prove the existence of a perfect dominating set of these $q$-regular graphs  which allow us to construct a new infinite family of $(q-1)$-regular graphs of girth $8$ and order $2q(q-1)^2$ for all prime powers $q$ (cf. Theorem \ref{q-1}), which are the smallest known so far for $q\ge 16$ whenever $q-1$ is not a prime power or a prime power plus one itself. Previously, the  smallest known $(q-1,8)$-graphs, for $q$ a prime power, were those of order $2q (q^2-q -1)$  which appeared in  \cite{B09}. The first ten improved values appear in the following table in which $k=q-1$ is the regularity of a $(k,8)$--graph, and the other columns contain the old and the new upper bound on its order.

\begin{center}
\begin{tabular}{|c|c|c|c|c|c|c|}
  \hline
  k & Bound in \cite{B09} & New bound  & & k & Bound in \cite{B09} & New bound\\
  \hline
  15 &   7648 &   7200 & & 52 & 292030 & 286624 \\
  22 &  23230 &  22264 & & 58 & 403678 & 396952 \\
  36 &  98494 &  95904 & & 63 & 515968 & 508032 \\
  40 & 134398 & 131200 & & 66 & 592414 & 583704 \\
  46 & 203134 & 198904 & & 70 & 705598 & 695800 \\
  \hline
\end{tabular}
\end{center}



\section{Construction of small $(q-1)$-regular graphs of girth 8}\label{SecCons}

 In this section we construct $(q-1)$-regular graphs of girth 8 with $2q(q-1)^2$ vertices, for every prime power $q \geq 4$.
To this purpose we need the following coordinatization of a Moore $(q+1, 8)$-cage $\Gamma_q$.

\begin{definition} \cite{M98,P70} \label{cage}
Let $\mathbb{F}_q$ be a finite  field with $q\ge 2$ a prime power and $\varrho$  a symbol not belonging to $\mathbb{F}_q$. Let $\Gamma_q= \Gamma_q[V_0,V_{1}]$ be a bipartite graph with vertex sets
$V_i=\mathbb{F}_q^3\cup\{(\varrho,b,c)_i, (\varrho,\varrho,c)_i: b, c \in \mathbb{F}_q \}\cup \{(\varrho, \varrho, \varrho)_i\}$, $i=0,1$, and
edge set defined as follows:
$$ \begin{array}{l}\mbox{For all } a\in \mathbb{F}_q\cup \{\varrho\}  \mbox{ and for all } b,c\in \mathbb{F}_q:\\[2ex]
N_{\Gamma_q}((a,b,c)_{1} )= \left\{\begin{array}{ll}
    \{(w,~aw+b,~a^2w+2ab+c)_{0}: w \in \mathbb{F}_q \}\cup \{(\varrho,a,c)_{0} \}  &\mbox{ if }   a\in \mathbb{F}_q;
\\[2ex]
\{( c ,b,w )_{0}: w \in \mathbb{F}_q \}\cup \{(\varrho,\varrho,c)_{0} \}  &\mbox{
if }  a= \varrho.
\end{array}\right.
\\
\mbox{}\\
N_{\Gamma_q}((\varrho,\varrho,c)_{1})= \{(\varrho,c,w)_{0}: w \in \mathbb{F}_q \}\cup
\{(\varrho,\varrho,\varrho)_{0} \}\\[1ex]
N_{\Gamma_q}((\varrho,\varrho,\varrho)_{1})= \{(\varrho,\varrho,w)_{0}: w \in \mathbb{F}_q\} \cup
\{(\varrho,\varrho,\varrho)_{0} \}.
\end{array}
$$
Or equivalently
$$ \begin{array}{l}\mbox{For all } i\in \mathbb{F}_q\cup \{\varrho\}  \mbox{ and for all }  j,k\in \mathbb{F}_q:\\[2ex]
N_{\Gamma_q}((i,j,k)_{0} )= \left\{\begin{array}{ll}
    \{(w,~j-wi,~w^2i-2wj+k)_{1}: w \in \mathbb{F}_q \}\cup \{(\varrho,j,i)_1 \}  &\mbox{ if }   i\in \mathbb{F}_q;
\\[2ex]
\{( j ,w,k )_{1}: w \in \mathbb{F}_q \}\cup \{(\varrho,\varrho,j)_{1} \}  &\mbox{ if }  i= \varrho.
\end{array}\right.
\\
\mbox{}\\
N_{\Gamma_q}((\varrho,\varrho,k)_{0})= \{(\varrho,w,k)_{1}: w \in \mathbb{F}_q \}\cup
\{(\varrho,\varrho,\varrho)_{1} \}; \\[1ex]
N_{\Gamma_q}((\varrho,\varrho,\varrho)_{0})= \{(\varrho,\varrho,w)_{1}: w \in \mathbb{F}_q\} \cup
\{(\varrho,\varrho,\varrho)_{1} \}.
\end{array}
$$
\end{definition}

\noindent Note that $\varrho$ is just a symbol not belonging to $\mathbb{F}_q$ and no arithmetical operation  will be performed  with it.  Figure \ref{spanning} shows a spanning tree of $\Gamma_q$ with the vertices  labelled according to Definition \ref{cage}.
\begin{figure}
  \centering
  \includegraphics[width=16cm]{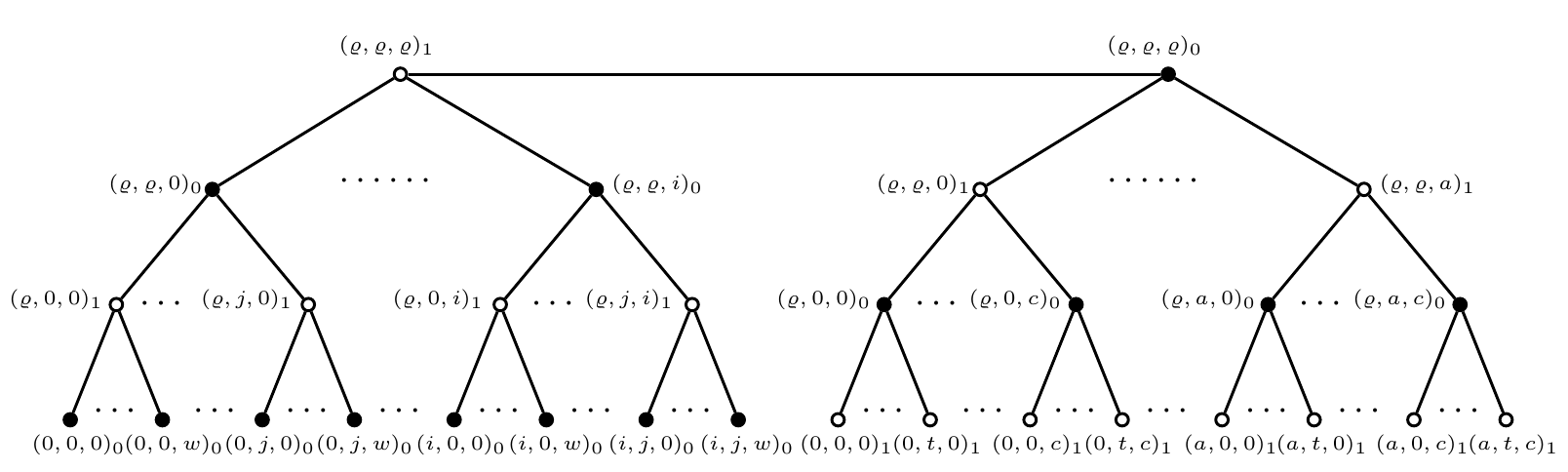}\\
  \caption{Spanning tree of $\Gamma_q$.}\label{spanning}
\end{figure}


 \begin{proposition}\label{perfect1} Let $q\ge 2$ be a  prime power  and let  $\Gamma_q=\Gamma_q[V_{0},V_{1}]$ be the Moore
$(q+1, 8)$-graph with the coordinatization in Definition \ref{cage}.
  Let $A=\{(\varrho,0,c)_{1}:c\in \mathbb{F}_q\}\cup \{(\varrho,\varrho,0)_{1}\}$ and let $x\in   \mathbb{F}_q\setminus \{0\}$.
  Then the set $$N_{\Gamma_q}[A]\cup \left(\bigcap_{a\in A}N_{\Gamma_q}^2(a)\right)\cup N_{\Gamma_q}^2[(\varrho,\varrho,x)_{1}]  $$ is a perfect dominating set of  $\Gamma_q$ of cardinality $2(q^2+3q+1)$.

  \end{proposition}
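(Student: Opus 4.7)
The plan is a direct verification using the coordinate-based adjacency rules of Definition~\ref{cage}. I would compute, piece by piece, what each of the three summands of the set $D$ looks like; then sum and apply inclusion--exclusion to obtain the cardinality; then verify the perfect-domination property by partitioning $V(\Gamma_q)\setminus D$ into coordinate classes and applying the adjacency rules once per class.

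For the three pieces: since $\Gamma_q$ has girth $8$, the neighborhoods of the $q+1$ elements of $A\subseteq V_1$ are pairwise disjoint, giving $|N_{\Gamma_q}[A]|=(q+1)+(q+1)^2$. Computing $N^2(a)$ for each $a\in A$ directly from the two-branch adjacency rule and intersecting, I expect
\[
\bigcap_{a\in A}N_{\Gamma_q}^2(a) = \{(0,0,w)_1 : w\in\mathbb{F}_q\}\cup\{(\varrho,\varrho,\varrho)_1\},
\]
a set of $q+1$ vertices: once two of the $N^2(a)$'s are intersected only these ``diagonal'' coordinates survive. A similar direct calculation shows $|N_{\Gamma_q}^2[(\varrho,\varrho,x)_1]|=q^2+2q+2$, with distance-$1$ part $\{(\varrho,x,w)_0 : w\in\mathbb{F}_q\}\cup\{(\varrho,\varrho,\varrho)_0\}$ and distance-$2$ part $\{(x,v,w)_1 : v,w\in\mathbb{F}_q\}\cup\{(\varrho,\varrho,w)_1 : w\neq x\}\cup\{(\varrho,\varrho,\varrho)_1\}$. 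The assumption $x\neq 0$ then forces the pairwise overlaps between the three summands to be exactly $\{(\varrho,\varrho,0)_1\}$ and $\{(\varrho,\varrho,\varrho)_1\}$ in $V_1$ and $\{(\varrho,\varrho,\varrho)_0\}$ in $V_0$, so inclusion--exclusion yields $|D|=2(q^2+3q+1)$.

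For the perfect-domination property, these descriptions give
\[
D\cap V_1 = \{(0,0,w)_1\}\cup\{(x,v,w)_1\}\cup\{(\varrho,0,c)_1\}\cup\{(\varrho,\varrho,w)_1 : w\in\mathbb{F}_q\cup\{\varrho\}\}
\]
together with the analogous list $D\cap V_0=\{(c,0,w)_0\}\cup\{(\varrho,b,w)_0 : b\in\{0,x\}\}\cup\{(\varrho,\varrho,c)_0 : c\in\mathbb{F}_q\cup\{\varrho\}\}$. Using bipartiteness, $V_1\setminus D$ splits into the three coordinate classes $(i,j,k)_1$ with $i\in\mathbb{F}_q\setminus\{0,x\}$, $(0,j,k)_1$ with $j\neq 0$, and $(\varrho,b,c)_1$ with $b\in\mathbb{F}_q\setminus\{0\}$, and $V_0\setminus D$ splits into $(i,j,k)_0$ with $j\neq 0$ together with $(\varrho,b,c)_0$ with $b\in\mathbb{F}_q\setminus\{0,x\}$. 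For each class I would write out the $q+1$ neighbors via the adjacency rule and inspect their first two coordinates: for example, for $u=(i,j,k)_1$ with $i\notin\{0,x\}$, the neighbor $(\varrho,i,k)_0$ is excluded because $i\notin\{0,x\}$, and among the remaining $q$ neighbors $(w,iw+j,i^2w+2ij+k)_0$ exactly one --- that with $w=-j/i$ --- satisfies $iw+j=0$ and thus lies in the stratum $\{(c,0,w)_0\}$ of $D$. The other four classes reduce to analogous one-line checks. The main obstacle is the combinatorial bookkeeping for $\bigcap_{a\in A} N^2(a)$ while carefully keeping the symbol $\varrho$ separate from field elements; once that is done, the perfect-domination verification is uniform and each case reduces to a single linear condition over $\mathbb{F}_q$.
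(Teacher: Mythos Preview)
Your proposal is correct, and for the identification of $I_A=\bigcap_{a\in A}N^2(a)$, the listing of $D\cap V_0$ and $D\cap V_1$, and the inclusion--exclusion giving $|D|=2(q^2+3q+1)$, it matches the paper's argument essentially verbatim.

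Where you diverge is in the perfect-domination step. You partition $V(\Gamma_q)\setminus D$ into five coordinate classes and, for each class, inspect the $q+1$ neighbours directly to isolate the unique one landing in $D$; each case reduces to a single linear equation over $\mathbb{F}_q$ (e.g.\ $iw+j=0$ or $w=x$). The paper instead argues structurally: it analyses the subgraph $H=\Gamma_q[D]$, records the degree of each type of vertex in $H$, and observes that $H$ has diameter~$5$. Combined with girth~$8$, this forces $|N_{\Gamma_q}(v)\cap D|\le 1$ for every $v\notin D$ (two neighbours in $D$ would close a cycle of length at most~$7$), and a global edge count $|E[D,V\setminus D]|=|V\setminus D|$ then upgrades ``at most one'' to ``exactly one''. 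Your direct verification is more elementary and self-contained but requires five separate coordinate checks; the paper's counting argument is shorter once the degree table for $H$ is established, and it makes the role of the girth hypothesis explicit rather than implicit in the coordinate bookkeeping.
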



\begin{proof}
 From Definition \ref{cage}, it follows that
  $A=\{(\varrho,0,c)_1: c\in \mathbb{F}_q\}\cup \{(\varrho,\varrho,0)_1\}$ has cardinality $q+1$ and its elements are mutually at distance four. Then $|N_{\Gamma_q}[A]|=(q+1)^2+q+1$.
By Definition \ref{cage},   $N_{\Gamma_q}((\varrho,0,c)_{1})=\{(c,0,w)_{0}:w\in \mathbb{F}_q\}\cup \{(\varrho,\varrho,c)_{0}\}$; and
$N_{\Gamma_q}((\varrho,\varrho,0)_{1})=\{(\varrho,0,w)_{0}:w\in \mathbb{F}_q\}\cup \{(\varrho,\varrho,\varrho)_{0}\}$. Then $(\varrho,\varrho,\varrho)_1\in N_{\Gamma_q}^2((\varrho,0,c)_{1}))\cap  N_{\Gamma_q}^2((\varrho,\varrho,0)_{1}))$ for all $c\in \mathbb{F}_q$. Moreover,  $N_{\Gamma_q}((c,0,w)_{0})=\{(a,-ac,a^2c+w)_1:a\in \mathbb{F}_q\}\cup \{(\varrho,0,c)_{1}\}$. Thus, for all $c_1,c_2,w_1,w_2\in \mathbb{F}_q$, $c_1\ne c_2$, we have $(a,-c_1a ,a^2c_1+w_1)_1=(a,-c_2a ,a^2c_2+w_2)_1$ if and only if $a=0$ and $w_1=w_2$. Let $I_A=\bigcap_{a\in A}N_{\Gamma_q}^2(a)$. We conclude that $ I_A=\{(\varrho,\varrho,\varrho)_1\}\cup \{(0,0,w)_{1}: w\in \mathbb{F}_q\}$  which implies that $|N_{\Gamma_q}[A]|+|I_A|=(q+1)^2+2(q+1)$.

  Since $N_{\Gamma_q}^2[(\varrho,\varrho,x)_{1}]=\bigcup_{j\in\mathbb{F}_q }
N_{\Gamma_q}[(\varrho,x,j)_{0}]  \cup N_{\Gamma_q}[(\varrho,\varrho,\varrho)_{0}]$ we obtain
   that
$(N_{\Gamma_q}[A]\cup I_A)\cap N_{\Gamma_q}^2[(\varrho,\varrho,x)_{1}]=\{(\varrho,\varrho,\varrho)_{0}, (\varrho,\varrho,0)_{1}, (\varrho,\varrho,\varrho)_{1}\}$.
Let  $D=N_{\Gamma_q}[A]\cup I_A\cup N_{\Gamma_q}^2[(\varrho,\varrho,x)_{1}]$, then
 $$\begin{array}{ll} |D| &=|N_{\Gamma_q}[A]|+|I_A|+|N_{\Gamma_q}^2[(\varrho,\varrho,x)_{1}]|-3\\
 &=(q+1)^2+2(q+1)+  1+(q+1)+q(q+1)-3\\ &=2q^2+6q+2.\end{array}$$
 Let us prove that $D$ is a perfect dominating set of $\Gamma_q$.

Let $H$ denote the subgraph of $\Gamma_q$ induced by $D$. Note that for $t,c\in \mathbb{F}_q$, the vertices $(x,t,c)_1\in N_{\Gamma_q}^2((\varrho,\varrho,x)_{1})$ have degree 2 in $H$ because they are adjacent to the vertex $(\varrho, x, t)_0\in N_{\Gamma_q}(\varrho, \varrho,x)_1$ and also to the vertex  $(-x^{-1}t,0, xt+z)_0\in  N_{\Gamma_q}(A)$. This   implies that the vertices $(i,0,j)_0\in N_{\Gamma_q}(A)$, $i,j\in \mathbb{F}_q$,   have degree 3 in $H$ and, also that the diameter of $H$ is 5. Moreover, for $k\in \mathbb{F}_q$, the vertices $(\varrho, \varrho, k)_0, (\varrho, 0, k)_0  \in D$ have degree 2 in $H$ and the vertices $(\varrho, \varrho, j)_1\in D$, $j\in \mathbb{F}_q\setminus \{0,x\}$ have degree 1 in $H$. All other vertices in $D$ have degree $q+1$ in $H$.

Since the diameter of $H$ is $5$ and the girth is $8$,  $|N_{\Gamma_q}(v)\cap D|\le 1$ for all $v\in V(\Gamma_q)\setminus D$, and also  for all distinct $d,d'\in D$ we have $(N_{\Gamma_q}(d)\cap N_{\Gamma_q}(d'))\cap (V(\Gamma_q)\setminus D)=\emptyset$. Then,
$|N_{\Gamma_q}(D)\cap (V(\Gamma_q)\setminus D)|= q^2 (q-2)+2q(q-1)+(q-2)q+q^2(q-1)=2q^3-4q=|V(\Gamma_q)\setminus D|$. Hence $|N_{\Gamma_q}(v)\cap D|= 1$ for all $v\in V(\Gamma_q)\setminus D$. Thus $D$ is a perfect dominating set of $\Gamma_q$.
\end{proof}

 \begin{definition}\label{GrGq}
Let $q\ge 4$ be a  prime power and let $x\in \mathbb{F}_q\setminus \{0,1\}$.   Define  $G^x_q$  as the $q$-regular graph of girth 8 and order $2q(q^2-2)$ constructed in Proposition \ref{perfect1}.
\end{definition}

\begin{theorem}\label{q-1}
Let $q\ge 4$ be a  prime power  and let $G^x_q$ be the graph given in Definition~\ref{GrGq}.
Let $R=N_{G^x_q}(\{(\varrho,j,k)_0: j,k\in \mathbb{F}_q, j\not=0,1,x\})\cap N^5_{G^x_q}((\varrho,1,0)_0)$.
Then, the set
 $$ S:= \bigcup_{j\in \mathbb{F}_q} N_{G^x_q}[(\varrho,1,j)_0]\cup   N_{G^x_q}[R]$$
is a perfect dominating set in $G^x_q$ of cardinality $4q^2-6q$.
Hence, $G^x_q-S$ is a $(q-1)$-regular graph of girth $8$ and order $2q(q-1)^2$.
\end{theorem}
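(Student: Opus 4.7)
The plan is to follow the template of Proposition \ref{perfect1}: exhibit $S$ explicitly inside $V(G^x_q)$, compute $|S|=4q^2-6q$, and then show that every vertex of $V(G^x_q)\setminus S$ has exactly one neighbor in $S$. From this, $G^x_q-S$ is automatically $(q-1)$-regular (each surviving vertex loses a single neighbor), has order $2q(q^2-2)-(4q^2-6q)=2q(q-1)^2$, and has girth $\ge 8$ as a subgraph of $G^x_q$; equality follows from the abundance of $8$-cycles of $\Gamma_q$ disjoint from $D\cup S$.

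I would compute $|S|$ in two stages. First, set $S_1:=\bigcup_{j\in\mathbb{F}_q}N_{G^x_q}[(\varrho,1,j)_0]$. By Definition \ref{cage}, the $\Gamma_q$-neighbors of $(\varrho,1,j)_0$ are $\{(1,w,j)_1:w\in\mathbb{F}_q\}\cup\{(\varrho,\varrho,1)_1\}$, and a direct inspection of the set $D$ built in the proof of Proposition \ref{perfect1} shows that $(\varrho,\varrho,1)_1$ is the unique one of these lying in $D$. Hence each centre has degree $q$ in $G^x_q$, the $q$ open neighborhoods are pairwise disjoint (distinguished by the third coordinate), so $|S_1|=q(q+1)$. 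Second, I would pin down $R$ arithmetically. Each $(\varrho,j,k)_0$ with $j\in\mathbb{F}_q\setminus\{0,1,x\}$ has neighbors $\{(j,w,k)_1:w\in\mathbb{F}_q\}$ in $G^x_q$, so $R\subseteq\{(j,w,k)_1:j\notin\{0,1,x\},\,w,k\in\mathbb{F}_q\}$. Since $\Gamma_q$ is Moore of diameter $4$, each such vertex lies at distance $3$ from $(\varrho,1,0)_0$ in $\Gamma_q$ via a unique length-$3$ path, and its distance in $G^x_q$ is $5$ (the bipartite parity forces the next odd value above $3$) exactly when that path crosses $D$. Matching the two parameterizations of the path shows its $V_1$-intermediate is of the form $(1,w',0)_1\notin D$, while its $V_0$-intermediate lies in $D$ iff its second coordinate vanishes; this reduces to a single equation in $(j,w,k)$, which determines $R$ as a union of affine families. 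Counting, and using the girth $8$ to track when two elements of $R$ share a common $V_0$-neighbor in $G^x_q$, yields $|N_{G^x_q}[R]|=3q^2-7q$. The evident disjointness $S_1\cap N_{G^x_q}[R]=\emptyset$ (the $V_1$-parts have first coordinate $1$ versus $\ne 1$) then gives $|S|=4q^2-6q$.

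Finally, I would check the perfect-dominating property. Suppose some $v\in V(G^x_q)\setminus S$ had two neighbors $s,s'\in S$; combining the length-$2$ walk from $s$ to $s'$ through $v$ with any length-$\le 5$ path from $s$ to $s'$ inside $S$ would yield a cycle of length $\le 7$ in $G^x_q$, contradicting girth $8$. A short case split, depending on whether $s,s'$ both lie among the centres $(\varrho,1,j)_0$, both in $R$, or one in each family, supplies such a short internal connection from the explicit structure assembled above. A global count matching the total external degree out of $S$ against $|V(G^x_q)\setminus S|=2q(q-1)^2$ then forces each outside vertex to have exactly one neighbor in $S$, exactly as in Proposition \ref{perfect1}. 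The main obstacle is the middle stage: turning the geometric condition ``distance exactly $5$'' into an arithmetic description of $R$ sharp enough both to enumerate $|R|$ and to detect its shared $V_0$-neighbors, while uniformly handling the characteristic-$2$ case and the possible exceptional value $j=3^{-1}$. Once this book-keeping is in place, the rest of the argument is routine.
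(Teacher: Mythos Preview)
Your overall plan is sound and parallels the paper's in its skeleton (compute $|S_1|=q(q+1)$, pin down $R$ explicitly, get $|N_{G_q}[R]|=3q^2-7q$, then count), but the paper executes the two hard steps rather differently from what you propose, and its route is worth knowing because it sidesteps precisely the book-keeping you flag as the main obstacle.

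For the size of $N_{G_q}[R]$, you plan a head-on computation: determine $R$ arithmetically, then track which pairs in $R$ share a $V_0$-neighbor. The paper instead works in the distance partition of $G_q$ from $(\varrho,1,0)_0$. Writing $P=\{(\varrho,j,k)_0:j\neq 0,1,x\}$, it first shows each $(\varrho,j,k)_0\in P$ has exactly one neighbor in $R$, giving $|R|=|P|=q(q-3)$. Then $|N_{G_q}(R)\setminus P|$ is pinned down by a sandwich: an upper bound comes from the inequality chain $|N^3_{G_q}((\varrho,1,0)_0)|\ge (q-1)|P|+|K|$ with $|K|\ge 2|N_{G_q}(R)\setminus P|$, and a lower bound from exhibiting $q$ elements of $R$ mutually at distance $4$. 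Equality is forced, yielding $|N_{G_q}(R)|=2q(q-2)$ with no shared-neighbor analysis at all.

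For perfect domination, you propose to show that any $s,s'\in S$ with a common outside neighbor lie at distance $\le 5$ inside $S$, then invoke girth. Be careful: the centres $(\varrho,1,j)_0$ are mutually at distance $6$ in $G_q$ and the induced subgraph on $S$ is not of small diameter, so the case split would have to be finer than ``both in $S_1$ / both in $R$ / mixed''. The paper again avoids this entirely: once the sandwich above is tight, an edge count gives $|E[N^3_{G_q}((\varrho,1,0)_0),\,N_{G_q}(R)]|=q(q-1)^2=|N^3_{G_q}((\varrho,1,0)_0)|$, so every vertex at level $3$ has exactly one neighbor in $N_{G_q}(R)$, and perfect domination follows immediately.

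One small correction: the exceptional value to worry about in the formula $t=(1-j^2)^{-1}jk$ is $j=-1$ (where $1-j^2=0$), not $j=3^{-1}$.
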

\begin{proof} Once $x\in \mathbb{F}_q\setminus \{0,1\}$ has been chosen to define $G^x_q$, to simplify notation, we will denote $G^x_q$ by $G_q$ throughout the proof.
Denote by $P=\{(\varrho,j,k)_0: j,k\in \mathbb{F}_q, j\not=0,1,x\}$, then  $R=N_{G_q}(P)\cap N^5_{G_q}((\varrho,1,0)_0)$. Note that $d_{G_q}((\varrho,1,0)_0, (\varrho,j,k)_0)=4$, because according to Definition \ref{cage}, $G_q$ contains the following paths of length four (see Figure \ref{arbol}):
$(\varrho,1,0)_0~(1,b,0)_1(w,w+b,w+2b )_0~ (j, t,k)_1~ (\varrho,j,k)_0$, for all $b,j,t\in \mathbb{F}_q$ such that $b+w\ne 0$ due to the vertices $(j,0,k)_0$  with second coordinate zero have been removed from $\Gamma_q$ to obtain $G_q$.
 By Definition \ref{cage} we have $w+b=jw+t$ and $w+2b=j^2w+2jt+k$. If  $w+b=0$, then $-w=b=tj^{-1}$ and $b=jt+k$ yielding that  $t=(1-j^2)^{-1}jk$. This implies that
$(j,(1-j^2)^{-1}jk,k)_1\in R$ is the unique neighbor in $R$ of $(\varrho,j,k)_0\in P$. Therefore every $(\varrho,j,k)_0\in P$ has a unique neighbor $(j,t,k)_1\in R$ leading to:
\begin{equation}\label{aux1}
 |R|=|P|=q(q-3).
 \end{equation}
 Thus, every $v\in N_{G_q}(R)\setminus P$ has at most
 $|R|/q=q-3$ neighbors in $R$ because for each $j$ the vertices from the set $\{(\varrho,j,k)_0:  k \in \mathbb{F}_q \}\subset P$  are mutually at distance 6 (they were the $q$ neighbors in $\Gamma_q$ of the removed vertex $(\varrho,\varrho,j)_1$). Furthermore, every $v\in N_{G_q}(R)\setminus P$ has at most one neighbor in $N_{G_q}^5((\varrho,1,0)_0)\setminus R$ because the vertices $\{(\varrho,1,j)_0: j \in \mathbb{F}_q, j\ne 0\}$ are mutually at distance 6. Therefore every $v\in N_{G_q}(R)\setminus P$   has at least two neighbors  in $ N_{G_q}^3((\varrho,1,0)_0)$. Thus denoting $K=N_{G_q}(N_{G_q}(R)\setminus P)\cap N_{G_q}^3((\varrho,1,0)_0)$ we have
 \begin{equation}\label{aux2}  |K|\ge  2 |N_{G_q}(R)\setminus P|  .
 \end{equation}
\noindent Moreover, observe that $ (N_{G_q}(P)\setminus R)\cap K =\emptyset$ because these two sets are at distance four (see Figure \ref{arbol}). Since the elements of $P$ are mutually at distance at least 4 we obtain that $|N_{G_q}(P)\setminus R|=q|P|-|R|=(q-1)|P|$. Hence by (\ref{aux1})
$$ |N_{G_q}^3((\varrho,1,0)_0)| \ge  | N_{G_q}(P)\setminus R |+|K|
=  (q-1)|P|+|K|=(q-1)q(q-3)+|K|.
  $$
Since  $|N_{G_q}^3((\varrho,1,0)_0)|=q(q-1)^2$  we obtain that $|K|\le 2q(q-1)$ yielding by (\ref{aux2}) that  $|N_{G_q}(R)\setminus P|\le q(q-1)$. As $P$ contains at least $q$ elements mutually at distance 6,  $R$ contains at least $q$ elements mutually at distance 4.  Thus we have
  $|N_{G_q}(R)\setminus P|\ge q^2-q$. Therefore $|N_{G_q}(R)\setminus P|= q^2-q$ and all the above inequalities are actually equalities. Thus by (\ref{aux1}) we get
\begin{equation}\label{aux3}|N_{G_q}(R) |=  q^2-q+|P|=2q( q-2)\end{equation}
 and every $v\in N_{G_q}(R)\setminus P$   has exactly $1$ neighbor in  $N_{G_q}^5((\varrho,1,0)_0)\setminus R$.
 Therefore we have
$$\begin{array}{lll}  | N^4_{G_q}((\varrho,1,0)_0)\setminus N_{G_q}(R)|&=&\displaystyle |\bigcup_{j\in \mathbb{F}_q\setminus \{0\}} (N^2_{G_q}((\varrho,1,j)_0)\cup P)\setminus N_{G_q}(R)|\\[1ex]
&=& q(q-1)^2+q(q-3)-2q(q-2)\\ &=& q(q-1)   (q-2).\end{array}$$
 Let us denote by $E[A,B]$ the set of edges between any two sets of vertices $A$ and $B$. Then
$|E[N^3_{G_q}((\varrho,1,0)_0),N^4_{G_q}((\varrho,1,0)_0)]|=q(q-1)^3$ and   $|E[N^3_{G_q}((\varrho,1,0)_0),N^4_{G_q}((\varrho,1,0)_0)\setminus N_{G_q}(R)]|=q(q-1)^2(q-2)$. Therefore, $$|E[N^3_{G_q}((\varrho,1,0)_0),  N_{G_q}(R)]|=q(q-1)^3-q(q-1)^2(q-2)=q(q-1)^2=|N^3_{G_q}((\varrho,1,0)_0)|,$$ which  implies that every $v\in N_{G_q}^3((\varrho,1,0)_0)$   has exactly one neighbor   in $N_{G_q}(R) $. It follows   that  $S=\bigcup_{j\in \mathbb{F}_q} N_{G_q}[(\varrho,1,j)_0]\cup   N_{G_q}[R] $ is a perfect dominating set of $G_q$.
Furthermore, by (\ref{aux1}) and  (\ref{aux3}),  $|S|=q^2+q+ q(3q-7)=4q^2-6q$.
 Therefore a $(q-1)$-regular graph of girth 8  can be obtained by deleting from $G_q$  the  perfect dominating set $S$, see Figure \ref{arbol}. This graph  has order $2q(q^2-2)- 2q(2q -3)=2q(q -1)^2 $.
\end{proof}

\begin{figure}
  \centering
  \includegraphics[width=16cm]{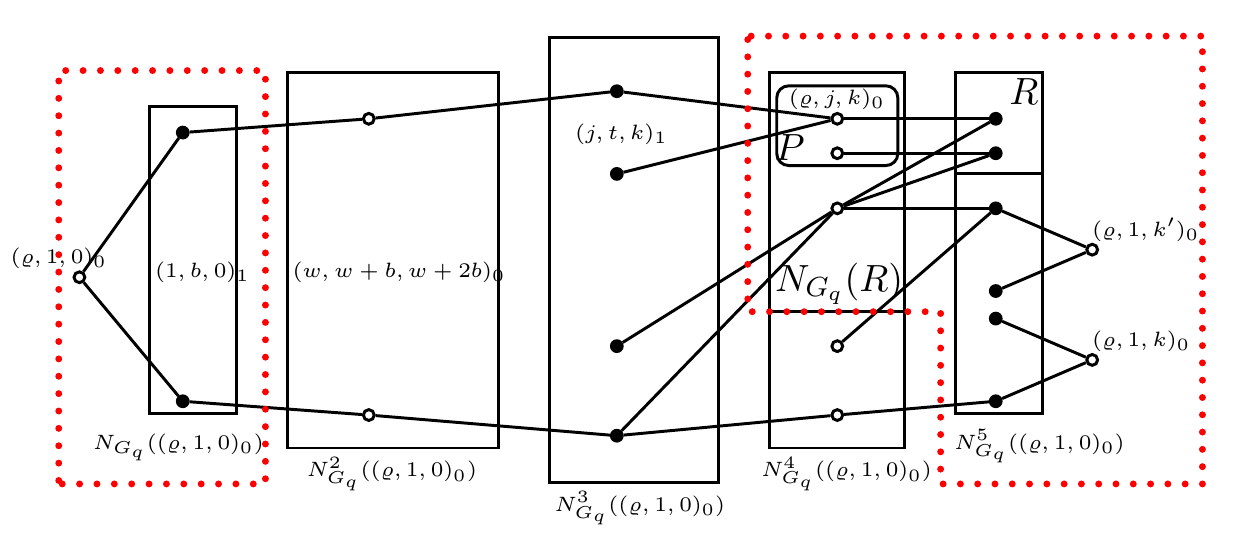}\\
  \caption{Structure of the graph $G_q$. The perfect dominating set lies inside the dotted box.}\label{arbol}
\end{figure}

\end{document}